\newtheorem{theorem}{Theorem}
\newtheorem{lemma}[theorem]{Lemma}
\newtheorem{corollary}{Corollary}
\theoremstyle{definition} 
\newtheorem*{definition}{Definition}
\theoremstyle{remark}
\newcommand {\mr}{\mathrm}
\newcommand{\ts}{\widetilde s}
\newcommand{\tg}{\widetilde g}
\newcommand{\tS}{\widetilde S}
\newcommand{\tGa}{\widetilde{\Gamma}}
\newcommand{\tG}{\widetilde{G}}
\newcommand{\tv}{\widetilde v}
\newcommand{\tone}{\widetilde{1}}
\begin{document}

\title[
Group cubization
]{
Group cubization
}

\author{Damian Osajda\\
  \\	{\tiny{\it{with an appendix by Mika\"el Pichot}}}}
\address{Instytut Matematyczny,
Uniwersytet Wroc\l awski\\
pl.\ Grun\-wal\-dzki 2/4,
50--384 Wroc{\l}aw, Poland}
\address{Institute of Mathematics, Polish Academy of Sciences\\
\'Sniadeckich 8, 00-656 War\-sza\-wa, Poland}
\email{dosaj@math.uni.wroc.pl}
\subjclass[2010]{{20F65, 20F50, 22D10}} \keywords{Burnside group, Kazhdan's property (T), CAT(0) cubical complex}

\begin{abstract}
We present a procedure of group cubization: it results in a group 
whose some features resemble the ones of a given group,
and which acts without fixed points on a CAT(0) cubical complex. As a main application we
establish lack of Kazhdan's property (T) for Burnside groups. 
\end{abstract}

\maketitle

\section{Introduction}
\label{s:intro}

The initial motivation for the current article was the following well-known question:

{\begin{center}\emph{Do all Burnside groups have Kazhdan's property (T)?}
\end{center}}
\smallskip

\noindent
(See e.g.\ \cite[Open Problem 17, page 9]{Ober}, the book \cite[Open Example 7.3, page 282]{BdlHV}, or 
\cite[page 1304]{Sha}, and \cite[Conjecture]{Sha2}. 
In the latter two, Y.\ Shalom presents it as a conjecture and provides a motivation.)

Recall that infinite \emph{Burnside groups}, that is, finitely generated groups of bounded torsion were first
constructed by Novikov-Adyan \cite{AdNov}. The \emph{free Burnside group} $B(m,n)$ is defined by the 
presentation \[ \langle s_1,s_2,\ldots,s_m \; | \; w(s_1,s_2,\ldots,s_m)^n \rangle,\] where $w$ runs over
all words in $s_1,s_2,\ldots, s_m$. We answer in the negative the above question by proving the following.

\begin{theorem}
\label{t:1}
	If the free Burnside group $B(m,n)$ is infinite then, for every integer $k>1$, the free Burnside group $B(m,kn)$ 
	acts without fixed points on a CAT(0) cubical complex and hence does not have Kazhdan's property (T).
\end{theorem}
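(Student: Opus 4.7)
The plan is to identify $B(m,kn)$, for $k>1$, with the group produced by the paper's cubization procedure applied to the standard presentation of $B(m,n)$, and then to quote the main cubization theorem to obtain the desired fixed-point-free action. Conceptually, cubization should take as input a presentation $\langle S \mid R \rangle$ and return a group acting without global fixed point on a canonically associated CAT(0) cubical complex; in our application the input is the Burnside presentation $\langle s_1,\ldots,s_m \mid w^n \rangle$ and the output should be $\langle s_1,\ldots,s_m \mid w^{kn}\rangle = B(m,kn)$, obtained by relaxing each defining relator $w^n$ to its $k$-th power. Thus the cubization of $B(m,n)$ is, by design, $B(m,kn)$; the factor $k>1$ in the statement is precisely the price paid for making the defining relators long enough to admit a wall structure.

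The technical core of the argument is the construction of the CAT(0) cubical complex and of the fixed-point-free action on it. I expect this to proceed through a wall structure on a Cayley-type complex of $B(m,n)$ whose walls are associated to the defining $n$-th powers, the cubical complex being dual (in the Sageev--Chatterji--Niblo sense) to this wall structure. The main hypothesis needed from the input group is a small-cancellation / separation property for the defining relators, so that walls exist in abundance and separate pairs of vertices. Infiniteness of $B(m,n)$ places $n$ in the Novikov--Adyan--Ol'shanskii regime, where the defining $n$-th powers satisfy the strong combinatorial small-cancellation estimates developed by Adyan and Ol'shanskii; these should supply exactly the separation input demanded by the cubization construction. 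This verification --- matching Ol'shanskii-style combinatorial smallness to the precise wall-separation hypothesis of the cubization machinery --- is what I expect to be the main obstacle.

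Given the cubization step, the conclusion is immediate. The cubized group, namely $B(m,kn)$, acts without global fixed point on the associated CAT(0) cubical complex. Any group with Kazhdan's property (T) fixes a point in every action on a CAT(0) cubical complex, by the theorem of Niblo--Reeves (building on Niblo--Roller) that realises the action on the $\ell^2$ space of half-spaces as an affine isometric action on a Hilbert space whose boundedness is equivalent to the existence of a fixed vertex. Therefore $B(m,kn)$ fails Kazhdan's property (T), answering the question recalled in the introduction in the negative.
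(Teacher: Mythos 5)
Your proposal has a genuine gap: the entire technical core is left as a conjecture about what the cubization machinery ought to be, and the machinery you conjecture is not the one that works. You posit a wall structure ``associated to the defining $n$-th powers'' of the presentation of $B(m,n)$, whose existence would rest on Adyan--Ol'shanskii small-cancellation estimates; nothing of that sort appears in the paper, you do not actually verify any separation property, and making such a relator-based wall structure work would be a major undertaking in its own right. The actual construction uses no Burnside-specific combinatorics at all. One takes the Cayley graph $\Gamma$ of $G=B(m,n)$ and its $\mathbb Z_k$--homology cover $\tGa$; by an observation of Wise (extended from $\mathbb Z_2$ to $\mathbb Z_k$), the preimage of each open edge of $\Gamma$ disconnects $\tGa$ into $k$ components, which endows $\tGa^{(0)}$ with an invariant space-with-walls structure. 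The group acting is $\tG$, the group of all lifts of the left-multiplication automorphisms of $\Gamma$; by Sabidussi's theorem $\tGa$ is its Cayley graph. Infiniteness of $B(m,n)$ enters only to guarantee that orbits are unbounded in the wall pseudo-metric, and the CAT(0) cube complex is the Nica/Chatterji--Niblo dual. The defining relators enter only once: a word trivial in $G$ lifts to a deck transformation, and the deck group $H_1(\Gamma;\mathbb Z_k)$ has exponent $k$, whence $\tG$ has exponent dividing $kn$.

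A second, smaller error: the cubization of $B(m,n)$ is not $B(m,kn)$ ``by design''. It is the specific group $\tG$ of lifts, which is an $m$-generated Burnside group of exponent $kn$ and hence only a \emph{quotient} of $B(m,kn)$; the theorem for $B(m,kn)$ itself follows by pulling the unbounded action back along the surjection $B(m,kn)\twoheadrightarrow\tG$, a step your identification skips over. Your concluding paragraph --- that a Kazhdan group has bounded orbits, hence a fixed point, in any action on a CAT(0) cubical complex --- is correct and agrees with the paper's final step.
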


As the main tool we introduce a general procedure called \emph{group cubization}.\footnote{We chose the term
 ``cubization" as an analogue of ``hyperbolization" \cite{Gro}. Both procedures modify the object while preserving
 some of its features. In contrast, ``cubulation" (see e.g.\ \cites{W-qch,W-book}) equips a given object with an additional structure.} It is a simple trick interesting on its own,
that we believe may be of broad use. It works as follows.

Let $G$ be a finitely generated group. Let $\tGa$ be the $\mathbb Z_k$--homology cover of 
a Cayley graph $\Gamma$ of $G$, where $\mathbb{Z}_k$ is the group $\mathbb Z/k\mathbb Z$ of integers modulo $k$.
The covering graph $\tGa$ is equipped with a structure of a space with walls, defined by preimages
of edges in $\Gamma$. We prove the following.

\begin{theorem}
\label{t:2}
	For a finitely generated group $G$ and its Cayley graph $\Gamma$, the $\mathbb Z_k$--homology cover 
	$\tGa$ of $\Gamma$ is a Cayley graph of a finitely generated group $\tG$. If $G$ is infinite then $\tG$ acts with unbounded orbits on a CAT(0) cubical complex.
\end{theorem}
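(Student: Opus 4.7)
The plan is to first identify $\tG$ as an explicit quotient of a free group so that $\tGa$ becomes its Cayley graph, and then to build a wall structure on $\tGa$ whose associated CAT(0) cube complex (via the standard space-with-walls construction) carries a $\tG$-action with unbounded orbits. Write $G = F_S/N$ where $F_S$ is the free group on the finite symmetric generating set $S$ defining $\Gamma$ and $N \trianglelefteq F_S$. The standard identification $\pi_1(\Gamma, 1) \cong N$, combined with the fact that $\Gamma$ is one-dimensional, yields $H_1(\Gamma; \mathbb{Z}_k) \cong N/[N,N]N^k$, so the $\mathbb{Z}_k$--homology cover $\tGa \to \Gamma$ corresponds to the normal subgroup $[N,N]N^k \trianglelefteq F_S$. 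Setting $\tG := F_S/[N,N]N^k$, the group $\tG$ is finitely generated by the image of $S$ and fits in the short exact sequence
\[ 1 \to N/[N,N]N^k \to \tG \xrightarrow{\pi} G \to 1. \]
The graph covering $\mr{Cay}(\tG, S) \to \mr{Cay}(G, S) = \Gamma$ induced by $\pi$ has deck group $N/[N,N]N^k \cong H_1(\Gamma; \mathbb{Z}_k)$, and by the uniqueness of the $\mathbb{Z}_k$--homology cover this canonically identifies $\mr{Cay}(\tG, S)$ with $\tGa$.

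For the wall structure, fix an edge $e \in E(\Gamma)$ and let $\phi_e \in C^1(\Gamma; \mathbb{Z}_k)$ be its indicator cochain; it is automatically a cocycle since $\Gamma$ is one-dimensional. Because every class in $H^1(\Gamma; \mathbb{Z}_k) = \mr{Hom}(H_1(\Gamma; \mathbb{Z}_k), \mathbb{Z}_k)$ factors through $H_1(\Gamma; \mathbb{Z}_k)$, the pullback $p^*[\phi_e] \in H^1(\tGa; \mathbb{Z}_k)$ vanishes, so there is a function $\tilde f_e \colon V(\tGa) \to \mathbb{Z}_k$, unique up to an additive constant, with $\tilde f_e(v') - \tilde f_e(v) = \phi_e(p(\tilde e))$ for every oriented edge $\tilde e = (v,v')$ of $\tGa$. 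I then take as walls the bipartitions $W_{e,j} = (\tilde f_e^{-1}(j),\, V(\tGa) \setminus \tilde f_e^{-1}(j))$ for $e \in E(\Gamma)$, $j \in \mathbb{Z}_k$. Only finitely many of these separate any two given vertices, because for any fixed edge-path between them the $\tilde f_e$-values coincide for all $e$ not traversed by the path. The $\tG$-action on $\tGa$ by left multiplication preserves this wall collection: the deck subgroup $N/[N,N]N^k$ shifts the index $j$ within each family $\{W_{e,j}\}_j$, while the induced $G$-action permutes the underlying edges $e$ of $\Gamma$. Applying the Sageev--Chatterji--Niblo--Nica construction then produces a CAT(0) cube complex $X$ with a $\tG$-action and an equivariant map $\iota \colon V(\tGa) \to X^{(0)}$ satisfying $d_X(\iota(v), \iota(v')) = \#\{\text{walls separating } v, v'\}$.

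Finally, to see that orbits are unbounded when $G$ is infinite, I take $g \in \tG$ with $d := |\pi(g)|_S$ and lift a geodesic word $s_1 \cdots s_d$ for $\pi(g)$ to a path in $\tGa$ from $\tilde 1$ to $g \tilde 1$; this path traverses distinct edges projecting to distinct edges $e_1, \ldots, e_d$ on a geodesic in $\Gamma$. Each traversal of $e_i$ changes $\tilde f_{e_i}$ by $\pm 1 \neq 0$ in $\mathbb{Z}_k$ while leaving $\tilde f_{e_j}$ fixed for $j \neq i$, so in each family $\{W_{e_i, j}\}_{j \in \mathbb{Z}_k}$ at least one wall separates $\tilde 1$ from $g \tilde 1$. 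This yields $d_X(\iota(\tilde 1), \iota(g \tilde 1)) \geq d = |\pi(g)|_S$, and since $G$ is infinite this length is unbounded as $g$ ranges over $\tG$, making the orbit of $\iota(\tilde 1)$ unbounded. The main technical obstacle is the second step: one must carefully convert the natural $k$-ary partition from $\tilde f_e$ into $\tG$-invariant bipartitions and verify that walls contributed by distinct geodesic edges $e_i$ of $\Gamma$ remain genuinely distinct, securing the linear lower bound $d_X \geq d_\Gamma \circ \pi$ on which the unboundedness argument rests.
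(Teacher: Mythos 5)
Your argument is correct in substance but takes a genuinely different route from the paper's, and the comparison is instructive. For the first assertion the paper does not pass through $F_S/[N,N]N^k$: it defines $\tG$ as the set of \emph{all lifts} of the left-multiplication automorphisms of $\Gamma$ (using that the cover is characteristic), checks that this set is a group acting freely and transitively on the vertices of $\tGa$, and invokes Sabidussi's theorem. Your presentation of $\tG$ as the explicit quotient $F_S/[N,N]N^k$ is arguably cleaner and makes Lemma~\ref{l:id} (hence Theorem~\ref{t:1}) immediate, at the cost of having to match the two covers, which you do correctly via uniqueness of the cover attached to the characteristic subgroup $[N,N]N^k=\ker\bigl(\pi_1(\Gamma,1)\to H_1(\Gamma;\mathbb Z_k)\bigr)$. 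For the walls, the paper quotes the fact (Wise, Khukhro) that the preimage of a non-separating open edge cuts $\tGa$ into $k$ components, takes all $2^{k-1}-1$ induced bipartitions as walls, treats separately the case where some edge separates $\Gamma$ (there it cubulates $G$ itself and lets $\tG$ act through the quotient), and asserts unboundedness of orbits without computation. Your cocycle-integration construction of the functions $\tilde f_e$ proves that cutting fact rather than citing it, uses only the $k$ bipartitions of the form (one level set, the rest), and yields the quantitative estimate $d_X(\iota(\tone),\iota(v))\ge d_\Gamma(1,p(v))$, which is exactly what the paper's ``clearly'' is hiding.

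Three small repairs are needed, none fatal. First, when $e$ separates $\Gamma$ the function $\tilde f_e$ takes only two values, so some $W_{e,j}$ have an empty side; discard those, and note that the one genuine wall retained still separates the two endpoints of every lift of $e$, so your counting argument runs uniformly and you in fact avoid the paper's case division. Second, the lift of a geodesic word for $\pi(g)$ starting at $\tone$ ends at \emph{some} vertex $v$ in the fiber over $\pi(g)$, not necessarily at $g\tone$; since $\tG$ acts transitively on vertices, $v$ lies in the orbit of $\tone$ anyway, so unboundedness is unaffected, but the inequality should be stated for $v$ rather than for $g\tone$. Third, the wall-distinctness issue you flag as the main obstacle has a one-line resolution: if $W_{e,j}=W_{e',j'}$ with $e\neq e'$ and both sides nonempty, then every edge of $\tGa$ outside $p^{-1}(e)$ has both endpoints on one side (as $\tilde f_e$ is constant across it), and likewise for $p^{-1}(e')$; since $p^{-1}(e)\cap p^{-1}(e')=\emptyset$, no edge of $\tGa$ crosses the wall, and connectivity of $\tGa$ forces one side to be empty, a contradiction.
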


The group $\tG$ is called a \emph{cubization} of $G$.
Theorem~\ref{t:1} follows easily from Theorem~\ref{t:2} since a cubization $\tG$ of a Burnside group
$G$ is a Burnside group (of exponent multiplied by $k$).
\medskip

In the rest of the paper, after some preliminaries (Section~\ref{s:Cay}) we prove (in Section~\ref{s:12}) Theorem~\ref{t:2}, then Theorem~\ref{t:1}, and finally we make some remarks on further applications of group cubizations (Section~\ref{s:app}).
\medskip

In an appendix by Mika\"el Pichot an alternative proof of a result similar to Theorem~\ref{t:1} is presented.
\medskip

\noindent
{\bf Acknowledgments.} I thank \'Swiatos\l aw R.\ Gal, Tadeusz Januszkiewicz, Damian Sawicki, Daniel T. Wise,
and the anonymous referees for remarks leading to improvements in the article.
Parts of the paper were written while visiting McGill University.
I would like to thank the Department of Mathematics and Statistics of McGill University
for its hospitality during that stay.
The research was partially supported by (Polish) Narodowe Centrum Nauki, grant no.\ UMO-2015/\-18/\-M/\-ST1/\-00050.

\section{Group cubization}
\label{s:Cay}
The results in this section use only few basic facts about covering spaces. A standard reference for 
those is e.g.\ \cite{Hatcher}. 

Let $\Gamma=\mr{Cay(G,S)}$ be a Cayley graph of a group $G$ generated by a finite symmetric set $S$ (that is, $S=S^{-1}$).
We use the convention that every vertex belongs to two edges corresponding to a pair $\{s,s^{-1}\}$, for each $s\in S$. In particular,
generators being involutions give rise to double edges, and the degree of vertices is $|S|$. 
Fix an integer $k>1$. Let $p\colon \tGa \to \Gamma$ be the \emph{$\mathbb Z_k$--homology covering} of $\Gamma$, that is,
the covering corresponding to the kernel $K$ of the natural map $\pi_1(\Gamma,1)\to H_1(\Gamma;\mathbb Z_k)=
\bigoplus_I \mathbb Z_k$, where $I$ is the set indexing generators of $\pi_1(\Gamma,1)$, and $1$ is the vertex 
of $\Gamma$ being the identity of $G$.
Observe that
it is a characteristic covering, that is, $K$ is a characteristic subgroup of $\pi_1(\Gamma,1)$.
This subgroup can be identified with $\pi_1(\tGa,\tone)$, where  $\tone$ is a vertex with $p(\tone)=1$. 
Therefore, for any automorphism $g$ of $\Gamma$, we have 
$g_{\ast}\circ p_{\ast} (\pi_1(\tGa,\tone))=p_{\ast} (\pi_1(\tGa,\tone))$, and hence
$g\circ p\colon \tGa \to \Gamma$ can be lifted to a map $\tg \colon \tGa \to \tGa$ satisfying
$p\circ \tg= g \circ p$.

Every element $g\in G$ defines an automorphism of $\Gamma$ given by left multiplication by $g$ (and
also denoted by $g$).
Let $\tG$ denote the set of all lifts of all such automorphisms.

\begin{theorem}
	\label{t:cay}
	The set $\tG$ forms a group generated by a set $\tS$ with $|\tS|=|S|$, and with $\tGa$ being its Cayley
	graph $\mr{Cay}(\tG,\tS)$.
\end{theorem}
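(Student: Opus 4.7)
The plan is to exhibit a natural bijection between $\tG$ and the vertex set of $\tGa$ via $\tg\mapsto\tg(\tone)$, read off the group structure and the Cayley graph structure from this identification, and finish by connectedness of $\tGa$. I would first check that $\tG$ is closed under composition and inverses: given lifts $\tg$ of $g$ and $\tilde h$ of $h$, the identities $p\circ(\tg\circ\tilde h)=(gh)\circ p$ and $p\circ\tg^{-1}=g^{-1}\circ p$ (both derived from $p\circ\tg=g\circ p$) show that $\tg\circ\tilde h$ and $\tg^{-1}$ lie in $\tG$; the identity map on $\tGa$ covers the identity of $\Gamma$ and serves as the unit.

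Next, I would show that the evaluation $\tg\mapsto\tg(\tone)$ is a bijection onto the vertex set of $\tGa$. For surjectivity, given $\tv$ with $p(\tv)=v$, the lifting procedure recalled in the preamble produces a lift of the automorphism $v$ of $\Gamma$ belonging to $\tG$, and among the available choices one may arrange it to send $\tone$ to any prescribed preimage of $v\cdot 1=v$, in particular to $\tv$. For injectivity, if $\tg(\tone)=\tilde h(\tone)$, then projecting via $p$ gives $g=h$, and two lifts of the same automorphism agreeing at $\tone$ must coincide.

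The heart of the argument is then to exhibit $\tS$ so that $\tGa=\mr{Cay}(\tG,\tS)$. For each $s\in S$ let $\tv_s$ denote the endpoint other than $\tone$ of the $s$--labeled edge at $\tone$, and let $\ts\in\tG$ be the unique element with $\ts(\tone)=\tv_s$; set $\tS=\{\ts:s\in S\}$. The equality $|\tS|=|S|$ holds because $\ts=\ts'$ implies $s\circ p=p\circ\ts=p\circ\ts'=s'\circ p$, and surjectivity of $p$ yields $s=s'$. Since every $\tg\in\tG$ projects to the $S$--label-preserving automorphism $g$ of $\Gamma$, it also preserves $S$--labels on edges of $\tGa$; in particular the $s$--labeled edge at $\tg(\tone)$ is $\tg\bigl(\{\tone,\tv_s\}\bigr)=\{\tg(\tone),\tg\ts(\tone)\}$. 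Through the bijection of the previous paragraph this becomes the Cayley edge $\{\tg,\tg\ts\}$, identifying $\tGa$ with $\mr{Cay}(\tG,\tS)$; connectedness of the cover $\tGa$ then forces $\tS$ to generate $\tG$.

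The main obstacle is the bookkeeping in this last step, aligning right multiplication in $\tG$ with walks along labeled edges of $\tGa$. In particular, one must verify that $\tS$ is symmetric, i.e.\ $\ts^{-1}=\widetilde{s^{-1}}$, which comes down to observing that the undirected edge $\{\tone,\tv_s\}$ carries label $s$ when viewed from $\tone$ and label $s^{-1}$ when viewed from $\tv_s$, so that $\ts(\tv_{s^{-1}})=\tone$. The first two steps are essentially formal consequences of covering-space theory.
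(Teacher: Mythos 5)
Your proposal is correct, and its core coincides with the paper's: both arguments reduce the theorem to the fact that $\tG$ acts freely and transitively on the vertices of $\tGa$ (your bijection $\tg\mapsto\tg(\tone)$ is exactly the statement that the action is simply transitive, with surjectivity coming from the lifting criterion for the characteristic cover and injectivity from uniqueness of lifts plus freeness of the $G$--action on $\Gamma$). The one genuine difference is the final step: the paper simply invokes Sabidussi's theorem to conclude that a graph admitting a free, vertex-transitive action by automorphisms is a Cayley graph, and then gets $|\tS|=|S|$ by comparing vertex degrees, whereas you reprove the relevant special case of Sabidussi by hand, using the fact that lifts preserve $S$--labels to match each edge of $\tGa$ with a Cayley edge $\{\tg,\tg\ts\}$, deducing generation of $\tS$ from connectedness of the cover, and getting $|\tS|=|S|$ from injectivity of $s\mapsto\ts$. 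Your route is more self-contained and makes the generating set $\tS$ explicit (which the paper only does after the theorem, when it fixes the lifts $\ts$ with $\ts(\tone)$ adjacent to $\tone$), at the cost of the label-chasing bookkeeping; in that bookkeeping you should be slightly careful with the paper's convention on involutions in $S$, which produce double edges, so that ``the $s$--labeled edge at $\tone$'' is well defined and the symmetry check $\ts^{-1}=\widetilde{s^{-1}}$ goes through for each of the two parallel edges --- a minor point that the degree-counting argument of the paper sidesteps.
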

\begin{proof}
	Observe that for any automorphism $g$ of $\Gamma$ its lift is entirely determined by 
	the value on any vertex of $\tGa$. Therefore, $\tG$ is a group and acts transitively
	on $\tGa$.
	
	For $g\in G$, if its lift $\tg$ fixes a vertex $\tv \in \tGa$ then $g(p(\tv))=p(\tv)$.
	Since $G$ acts freely on $\Gamma$, it follows that $g=1$, and hence $\tg$ is a deck transformation.
	As $\tg$ fixes a vertex, it is the identity on $\tGa$. Therefore, $\tG$ acts freely on $\tGa$. 
	
	From the Sabidussi theorem \cite{Sab} it follows now that $\tGa$ is the Cayley graph $\mr{Cay}(\tG,\tS)$,
	for some $\tS$. Since the degree of vertices in $\Gamma$ is equal to the degree of vertices in $\tGa$, and they are both equal
	to cardinalities of the corresponding generating sets, we have $|S|=|\tS|$.
\end{proof}

\begin{definition}
	\label{d:cub}	
	The group $\tG$ is called the \emph{cubization} of a group $G$ with respect to its Cayley graph $\Gamma=\mr{Cay(G,S)}$.
\end{definition}

Choose a vertex $\tone \in \tGa$ with $p(\tone)=1\in \Gamma$. For every $s\in S$ we may choose its lift $\ts$  such that $\ts(\tone)$ is a vertex adjacent to $\tone$. 
The set $\tS=\{\ts \;| \;s\in S\}$ of such lifts is a generating set of $\tG$. We use it below.

\begin{lemma}
	\label{l:id}
	Let $g_1,\ldots, g_n\in G$ be such that $g_1g_2\cdots g_n=_G1$.
	For $i=1,\ldots,n$, let $\tg_i$ be a lift of $g_i$.
	Then $(\tg_1\cdots \tg_n)^k= \mr{id}_{\tGa}$.
\end{lemma}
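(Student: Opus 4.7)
The plan is to observe that, because $g_1\cdots g_n =_G 1$, the composite $\tg_1\cdots \tg_n$ is a lift of the identity automorphism of $\Gamma$, and therefore a deck transformation of the covering $p\colon \tGa\to\Gamma$. The result then follows from the fact that the deck transformation group of the $\mathbb{Z}_k$--homology cover has exponent dividing $k$.

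First I would verify that a composition of lifts is a lift of the composition. Using the defining identity $p\circ \tg = g\circ p$ recalled just before the theorem, one gets inductively
\[
p\circ (\tg_1\cdots \tg_n) = g_1\circ\cdots\circ g_n\circ p = \mr{id}_\Gamma \circ p = p,
\]
since $g_1\cdots g_n=_G 1$ means the corresponding automorphism of $\Gamma$ (left multiplication) is the identity. Hence $\tg_1\cdots \tg_n$ is a lift of $\mr{id}_\Gamma$, i.e.\ a deck transformation of $p$.

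Next I would invoke the standard fact that, for the normal cover $p$ corresponding to the characteristic subgroup $K=\ker\bigl(\pi_1(\Gamma,1)\to H_1(\Gamma;\mathbb Z_k)\bigr)$, the group of deck transformations is isomorphic to $\pi_1(\Gamma,1)/K$. By the first isomorphism theorem this quotient embeds into $H_1(\Gamma;\mathbb Z_k)=\bigoplus_I \mathbb Z_k$, which has exponent $k$. Consequently every deck transformation has order dividing $k$. Applied to $\tg_1\cdots \tg_n$, this yields $(\tg_1\cdots\tg_n)^k=\mr{id}_{\tGa}$.

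The only subtle point I anticipate is the bookkeeping in the first step: one must be careful that $g_i$ denotes the automorphism (left multiplication by $g_i$) rather than the group element, so that ``$g_1\cdots g_n=1$ in $G$'' really does translate into $g_1\circ\cdots\circ g_n=\mr{id}_\Gamma$ as automorphisms (which holds because the left-multiplication representation $G\hookrightarrow \mr{Aut}(\Gamma)$ is a homomorphism). Everything else is a direct application of covering-space theory and the definition of the $\mathbb{Z}_k$--homology cover.
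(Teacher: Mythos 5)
Your proposal is correct and follows essentially the same route as the paper: identify $\tg_1\cdots\tg_n$ as a deck transformation (because it lifts the identity automorphism) and then use that the deck group, being a quotient of $\pi_1(\Gamma,1)$ by $K$, is (isomorphic to, in fact, not merely embedded in) $H_1(\Gamma;\mathbb Z_k)$, which has exponent $k$. The extra care you take with the composition of lifts is a fine elaboration of a step the paper leaves implicit.
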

\begin{proof}
	The set of deck transformations of $p\colon \tGa \to \Gamma$ is a group
	isomorphic to $\pi_1(\Gamma,1)/K$, i.e., to $H_1(\Gamma,\mathbb Z_k)=\bigoplus_I \mathbb Z_k$.
	Since the latter has exponent $k$, for every deck transformation $\tg$ (that 
	is when $p\circ \tg = p$)
	we have $\tg^k=\mr{id}_{\tGa}$.
	In particular, if $g_1\cdots g_n=_G1$ then $\tg_1\cdots \tg_n$ is a deck transformation, and hence $(\tg_1\cdots \tg_n)^k= \mr{id}_{\tGa}$.
\end{proof}

\section{Proofs of Theorem~\ref{t:1} and Theorem~\ref{t:2}}
\label{s:12}
\subsection{Proof of Theorem~\ref{t:2}}
By Theorem~\ref{t:cay} we have that $\tGa$ is the Cayley graph $\mr{Cay}(\tG,\tS)$.
First, we consider the case when no edge of $\Gamma$ separates $\Gamma$.
D.\ Wise \cite[Section 9]{W-qch} and \cite[Section 10.3]{W-book} 
observed that the 
vertex set of the 
$\mathbb Z_2$--homology cover of a graph has a natural structure of a space with walls.\footnote{In fact, in \cite{W-qch} the result is set in
	 the more general setting of CAT(0) cube complexes.
	The preprint was circulating since 2009, and the results were presented e.g.\ during a conference 
	at UQAM in April 2010. Independently, the same result has been shown for graphs in \cite{AGS}.
	}
Similar structure exists for the 
$\mathbb Z_k$--homology cover (see e.g.\ \cite[discussion on page 57]{Khu}):
the preimage of every open edge of $\Gamma$ disconnects the cover $\tGa$ into $k$ connected components.
Such a partition of vertices of $\tGa$ gives rise to $2^{k-1}-1$ partitions
into two nonempty sets -- these are walls in the space with walls  $(\tGa^{(0)}, \mathcal W)$.
Obviously, the group $\tG$ acts on $(\tGa^{(0)}, \mathcal W)$: for every generator $\ts \in \tS$ a wall corresponding to $e\in E$ is mapped by
$\ts$ to a wall corresponding to $s(e)$. Clearly, the action has orbits that are unbounded
with respect to the wall pseudo-metric. 

In the case when there exists an edge separating $\Gamma$, the translates (by $G$) of this edge
define the space with walls $(\Gamma^{(0)},\mathcal W')$. The $G$--action on $(\Gamma^{(0)},\mathcal W')$ has unbounded orbits, and induces a $\tG$--action on $(\Gamma^{(0)},\mathcal W')$ with the same property.

Finally, by \cites{Nica,ChaNi} it follows that
$\tG$ acts with unbounded orbits on a corresponding CAT(0) cubical complex.

\subsection{Proof of Theorem~\ref{t:1}}
Let $G=B(m,n)$ be an infinite free Burnside group.\footnote{See e.g.\ \cite{Coulon} for a description of the state of the art of the theory of infinite 
	Burnside groups.}
Let $\Gamma$ be its Cayley graph with respect to the (symmetric)
generating set $S$, with $|S|=2m$. 
By Theorem~\ref{t:cay}, the cubization $\tG$ of $G$ with respect to $\Gamma$ is a group with the Cayley graph $\tGa=\mr{Cay}(\tG,\tS)$ being the $\mathbb Z_k$--homology cover of $\Gamma$, for $\tS=\{\ts \; | \; s\in S\}$. 
For every word $w=\ts_{1}\cdots \ts_{l}$ we have that $(s_{1}\cdots s_{l})^n=_G1$ and hence, by Lemma~\ref{l:id}, $w^{kn}=_{\tG}1$. It follows that the cubization $\tG$ is a Burnside group
of exponent $kn$. 
By Theorem~\ref{t:2} the group $\tG$ acts with unbounded orbits on a CAT(0) cubical complex.
As $\tG$ is a quotient of the free Burnside group $B(m,kn)$, the latter admits a similar action.

\section{Further applications}
\label{s:app}

Theorem~\ref{t:2} may be used to establish the existence of unbounded actions on
CAT(0) cubical complexes for other classes of groups. 
In particular we have the following.

\begin{corollary}
	\label{c:app}
	If a group defined by a presentation $\langle S \; | \; r_1,r_2,\ldots \rangle$ is infinite
	then the group with the presentation $\langle S \; | \; r_1^k,r_2^k,\ldots \rangle$ acts with unbounded
	orbits on a CAT(0) cubical complex.
\end{corollary}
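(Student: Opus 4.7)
The plan is to imitate the proof of Theorem~\ref{t:1}: apply the cubization construction to the base group $G := \langle S \mid r_1, r_2, \ldots \rangle$ (assumed infinite), then identify the cubization $\tG$ as a quotient of $H := \langle S \mid r_1^k, r_2^k, \ldots \rangle$, and finally transport the CAT(0) cubical action from $\tG$ back to $H$ via the quotient map.

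First I would fix the Cayley graph $\Gamma = \mr{Cay}(G,S)$ and form its $\bZ_k$--homology cover $\tGa$. By Theorem~\ref{t:cay}, the cubization $\tG$ admits $\tGa$ as its Cayley graph $\mr{Cay}(\tG,\tS)$, where $\tS = \{\ts : s\in S\}$ is a chosen lift of $S$ based at $\tone$. Theorem~\ref{t:2}, applied to $G$, yields an action of $\tG$ with unbounded orbits on some CAT(0) cubical complex $X$.

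The key step is to show that $\tG$ is a quotient of $H$ via the bijection $s \mapsto \ts$. For each relator $r_i = s_{i,1}\cdots s_{i,l_i}$ we have $s_{i,1}\cdots s_{i,l_i} =_G 1$, so Lemma~\ref{l:id} gives $(\ts_{i,1}\cdots \ts_{i,l_i})^k = \mr{id}_{\tGa}$; that is, the image of $r_i^k$ in $\tG$ is trivial. Consequently the surjection $F(S) \twoheadrightarrow \tG$ induced by $s\mapsto \ts$ kills every $r_i^k$ and therefore factors through $H$, exhibiting $\tG$ as a quotient of $H$.

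Precomposing the $\tG$--action on $X$ with the quotient map $H \twoheadrightarrow \tG$ produces an $H$--action on $X$, and the $H$--orbits coincide with the $\tG$--orbits and are hence unbounded. I do not expect a genuine obstacle: the only points requiring care are that $\tS$ really does generate $\tG$ (supplied by Theorem~\ref{t:cay}) and that $r_i^k$ is trivial in $\tG$ (supplied by Lemma~\ref{l:id}), both of which are already established.
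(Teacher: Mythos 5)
Your proposal is correct and follows exactly the route the paper intends: the corollary is stated right after the remark that it follows from Theorem~\ref{t:2}, and the argument is the same as the paper's proof of Theorem~\ref{t:1} with the Burnside relators $w^n$ replaced by the $r_i$ (cubize the infinite group $G$, use Lemma~\ref{l:id} to kill the $r_i^k$ so that $\tG$ is a quotient of $\langle S \mid r_1^k, r_2^k,\ldots\rangle$, and pull back the action). The only cosmetic point is to symmetrize $S$ before forming the Cayley graph, which is the paper's standing convention.
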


This applies to the class of groups defined by presentations in which relators are proper powers.
Many important classical examples of groups belong here: generalized triangle groups, generalized von Dyck groups, groups given by Coxeter's presentations of types $(l,m\, | \, n,k)$, $(l,m,n;q)$, and $G^{m,n,p}$ (see e.g.\ \cite{Tho} and references therein), or various small cancellation groups (see e.g.\ \cites{W-qch,W-book} and references
therein).

%=======================================================================

\begin{bibdiv}
\begin{biblist}

\bib{AGS}{article}{
	author={Arzhantseva, Goulnara},
	author={Guentner, Erik},
	author={\v Spakula, J\'an},
	title={Coarse non-amenability and coarse embeddings},
	journal={Geom. Funct. Anal.},
	volume={22},
	date={2012},
	number={1},
	pages={22--36},
	issn={1016-443X},
	review={\MR{2899681}},
	doi={10.1007/s00039-012-0145-z},
}

\bib{Ober}{article}{
	label={BdlHV01},
	title={Geometrization of Kazhdan's Property (T)},
	note={Abstracts from the mini-workshop held July 7--14, 2001;
		Organized by B. Bekka, P. de la Harpe, A. Valette;
		Report No. 29/2001},
	journal={Oberwolfach Rep.},
	date={2001},
}

\bib{BdlHV}{book}{
	author={Bekka, Bachir},
	author={de la Harpe, Pierre},
	author={Valette, Alain},
	title={Kazhdan's property (T)},
	series={New Mathematical Monographs},
	volume={11},
	publisher={Cambridge University Press, Cambridge},
	date={2008},
	pages={xiv+472},
	isbn={978-0-521-88720-5},
	review={\MR{2415834}},
	doi={10.1017/CBO9780511542749},
}

\bib{ChaNi}{article}{
	author={Chatterji, Indira},
	author={Niblo, Graham},
	title={From wall spaces to $\rm CAT(0)$ cube complexes},
	journal={Internat. J. Algebra Comput.},
	volume={15},
	date={2005},
	number={5-6},
	pages={875--885},
	issn={0218-1967},
	review={\MR{2197811}},
	doi={10.1142/S0218196705002669},
}

\bib{Coulon}{article}{
	author={Coulon, R\'emi},
	title={On the geometry of Burnside quotients of torsion free hyperbolic
		groups},
	journal={Internat. J. Algebra Comput.},
	volume={24},
	date={2014},
	number={3},
	pages={251--345},
	issn={0218-1967},
	review={\MR{3211906}},
	doi={10.1142/S0218196714500143},
}

\bib{Gro}{article}{
	author={Gromov, Misha},
	title={Hyperbolic groups},
	conference={
		title={Essays in group theory},
	},
	book={
		series={Math. Sci. Res. Inst. Publ.},
		volume={8},
		publisher={Springer, New York},
	},
	date={1987},
	pages={75--263},
	review={\MR{919829}},
%	doi={10.1007/978-1-4613-9586-7_3},
}

\bib{Hatcher}{book}{
	author={Hatcher, Allen},
	title={Algebraic topology},
	publisher={Cambridge University Press, Cambridge},
	date={2002},
	pages={xii+544},
	isbn={0-521-79160-X},
	isbn={0-521-79540-0},
	review={\MR{1867354}},
}

\bib{Khu}{article}{
	author={Khukhro, Ana},
	title={Embeddable box spaces of free groups},
	journal={Math. Ann.},
	volume={360},
	date={2014},
	number={1-2},
	pages={53--66},
	issn={0025-5831},
	review={\MR{3263158}},
	doi={10.1007/s00208-014-1029-3},
}

\bib{Nica}{article}{
	author={Nica, Bogdan},
	title={Cubulating spaces with walls},
	journal={Algebr. Geom. Topol.},
	volume={4},
	date={2004},
	pages={297--309 (electronic)},
	issn={1472-2747},
	review={\MR{2059193}},
	doi={10.2140/agt.2004.4.297},
}

\bib{AdNov}{article}{
	author={Novikov, Petr S.},
	author={Adyan, Sergei I.},
	title={Infinite periodic groups. I, II, III},
	language={Russian},
	journal={Izv. Akad. Nauk SSSR Ser. Mat.},
	volume={32},
	date={1968},
	pages={212--244, 251--524, 709--731},
	issn={0373-2436},
	review={\MR{0240178}},
	review={\MR{0240179}},
	review={\MR{0240180}},
}

\bib{Sab}{article}{
	AUTHOR = {Sabidussi, Gert},
	TITLE = {On a class of fixed-point-free graphs},
	JOURNAL = {Proc. Amer. Math. Soc.},
	VOLUME = {9},
	YEAR = {1958},
	PAGES = {800--804},
	ISSN = {0002-9939},
	review={\MR{0097068}},
	DOI = {10.2307/2033090},
}

\bib{Sha}{article}{
	author={Shalom, Yehuda},
	title={The algebraization of Kazhdan's property (T)},
	conference={
		title={International Congress of Mathematicians. Vol. II},
		},
		book={
			publisher={Eur. Math. Soc., Z\"urich},
			},
			date={2006},
			pages={1283--1310},
			review={\MR{2275645}},
			}

\bib{Sha2}{article}{
	title     ={Rigidity Theory of Discrete Groups},
	author    ={Shalom, Yehuda},
	label     ={Sha06a},
%	status    ={lecture notes},
	conference={
		title={Lie Groups: Dynamics, Rigidity, Arithmetic; A conference in honor of the 60th birthday of Gregory Margulis},
		date={2006},		
		},
%	date      ={2006},
	eprint    ={http://people.brandeis.edu/~kleinboc/Margconf/shalom.pdf}
}

\bib{Tho}{article}{
	author={Thomas, Richard M.},
	title={Group presentations where the relators are proper powers},
	conference={
		title={Groups '93 Galway/St.\ Andrews, Vol.\ 2},
	},
	book={
		series={London Math. Soc. Lecture Note Ser.},
		volume={212},
		publisher={Cambridge Univ. Press, Cambridge},
	},
	date={1995},
	pages={549--560},
	review={\MR{1337297}},
	doi={10.1017/CBO9780511629297.021},
}

\bib{W-qch}{article}{
	title     ={The structure of groups with quasiconvex hierarchy},
	author    ={Wise, Daniel T.},
	status={preprint},
	eprint    ={https://docs.google.com/open?id=0B45cNx80t5-2T0twUDFxVXRnQnc},
	date={2011}
}

\bib{W-book}{book}{
	author={Wise, Daniel T.},
	title={From riches to raags: 3-manifolds, right-angled Artin groups, and
		cubical geometry},
	series={CBMS Regional Conference Series in Mathematics},
	volume={117},
	publisher={Published for the Conference Board of the Mathematical
		Sciences, Washington, DC; by the American Mathematical Society,
		Providence, RI},
	date={2012},
	pages={xiv+141},
	isbn={978-0-8218-8800-1},
	review={\MR{2986461}},
	doi={10.1090/cbms/117},
}
\end{biblist}
\end{bibdiv}

\newpage

\thispagestyle{empty}

\begin{center}
	\textbf{Appendix:}
	
	\textbf{A comment on Osajda's ``Group cubization'' paper}
	
	\bigskip
	{Mika\"el Pichot*}
	
	\bigskip
	\bigskip
\end{center}

Answering a question of Shalom, Damian Osajda proved in the main body of this paper that the free Burnside group $B(m,kn)$, for $k\geq 2$, does not have the property (T) of Kazhdan if $B(m,n)$ is infinite. 

Osajda's proof is a nice geometric argument using mod $k$ homology covers of Cayley graphs. 

There is also a more algebraic argument for a similar result, which can  implicitly be found in the literature. The goal of this appendix is to explain this alternative argument. 

Consider the  wreath product $W:=\mathbb{Z}/k\mathbb{Z} \wr B(m,n)$ of $B(m,n)$ with a finite cyclic group. Notice that the order of every element in $W$ divides $kn$, and that $W$ is generated by $m+1$ elements. Therefore, by the universal property, $W$ is a quotient of the free Burnside group $B(m+1,kn)$ of exponent $kn$. It follows from [1, Theorem 3, p. 1897] or [3, Theorem 1.2, p. 168] that the wreath product $H\wr G$ does not have Kazhdan's property (T) if $G$ is infinite and $H$ is not trivial.  
In particular, the group $B(m+1,kn)$ does not have Kazhdan's property (T) if $B(m,n)$ is infinite. (Both the wreath product approach and Osajda's mod $k$ homology cover approach provide wall spaces in the over group, and therefore prove more than just the lack of property (T).)

The fact that nontrivial wreath product decompositions appear as quotients of large free Burnside groups was used in [2, Proof of Theorem 2] in relation with Dixmier's unitarizability problem. 

The author is grateful to Nicolas Monod, Narutaka Ozawa, and Alain Valette for their comments.

\bigskip
\medskip

\begin{center}
	\textsc{References}
\end{center}

%\bigskip
\medskip

\begin{itemize}
	\item[{[1]}] P.-A. Cherix, F. Martin, and A. Valette. ``Spaces with measured walls, the Haagerup property and property (T)''. Ergodic theory and dynamical systems 24.06 (2004), pp. 1895--1908.
	\item[{[2]}] N. Monod and N. Ozawa. ``The Dixmier problem, lamplighters and Burnside group''. Journal of Functional Analysis 258.1 (2010), pp. 255--259.
	\item[{[3]}] M. Neuhauser. ``Relative property (T) and related properties of wreath products''. Mathematische Zeitschrift 251.1 (2005), pp. 167--177.
\end{itemize}
\bigskip

(*) Dept. of Mathematics \& Statistics, McGill University, Montr\'eal, Qu\'ebec, Canada H3A 2K6. \email{pichot@math.mcgill.ca}

\end{document}